\documentclass[11pt]{article}
\usepackage[a4paper,margin=28mm]{geometry}
\usepackage[T1]{fontenc}
\usepackage{lmodern,amsmath,amssymb,amsthm,mathtools,microtype}
\usepackage[colorlinks=true,linkcolor=blue,citecolor=blue,urlcolor=blue]{hyperref}
\newtheorem{theorem}{Theorem}
\newtheorem{lemma}[theorem]{Lemma}
\newtheorem{proposition}[theorem]{Proposition}
\newcommand{\R}{\mathbb R}

\newcommand{\I}{\mathbf I}
\newcommand{\M}{\mathbf M}

\newcommand{\asrk}{\operatorname{asrk}}
\newcommand{\Fill}{\operatorname{Fillvol}}
\newcommand{\sys}{\operatorname{sys}}
\newcommand{\inj}{\operatorname{inj}}
\newcommand{\wlim}{\hbox{$\omega$-$\lim$ }}

\title{CAT(0) spaces without linear filling\\at the asymptotic rank}
\author{Stephan Stadler}
\date{}
\begin{document}
\maketitle
\begin{abstract}
For every integer $\nu\ge2$, we construct a $(\nu+1)$-dimensional,
locally compact, geodesically complete CAT(0) space of asymptotic rank
$\nu$ which does not satisfy a linear isoperimetric inequality for
integral $\nu$-cycles. Its filling function is bounded below by
$c v\log v$ for all sufficiently large $v$.
\end{abstract}

\noindent{\bf 1. Introduction}
\medskip

\noindent Gromov's isoperimetric gap conjecture predicts a linear filling
inequality in dimensions at least the asymptotic rank for locally compact
cocompact CAT(0) spaces \cite[\S6.B$_2$, p.~128, (b)]{Gromov}.
More generally, one may ask whether every CAT(0) space of asymptotic
rank $\nu$ satisfies
\[
 \Fill_X(T)\le C\cdot\M(T)
\]
for integral $k$-cycles, $k\ge\max\{\nu,1\}$.
Here $\M(T)$ denotes the mass and $\Fill_X(T)$ the infimum of the
masses of its fillings. We write
\[
 \mathrm{FV}_{k+1}^X(v)=\sup\{\Fill_X(T):T\in\I_k(X),
 \partial T=0,\ \M(T)\le v\}.
\]
Wenger proved that the filling function in
these dimensions has sub-Euclidean growth
\cite[Theorem~1.2]{Wenger2011}.

In asymptotic rank at most two, Dru\c{t}u--Lang--Papasoglu--Stadler
\cite[Theorem~A and Corollary~B]{DLPS} proved filling inequalities
with exponent $1+\delta$, for every $\delta>0$, for Lipschitz
two-spheres in locally compact CAT(0) spaces and, with constants depending on
the genus, for closed surfaces. Lang--Stadler--Urech
\cite[Theorem~1.1]{LSU} obtained the corresponding inequality for
compactly supported integral $k$-cycles, $k\ge2$, assuming finite
asymptotic Nagata dimension. Recently, Peteranderl
\cite[Theorem~1.2]{Peteranderl} proved the linear inequality under
the same covering assumption, in every dimension at least the
asymptotic rank. The following example shows that this assumption
cannot be omitted.

\begin{theorem}\label{thm:main}
For every integer $\nu\ge2$, there exists a $(\nu+1)$-dimensional,
locally compact, geodesically complete CAT(0) space $X$ of asymptotic
rank $\nu$ and compactly supported integral $\nu$-cycles $T_n$ such that
\[
 \frac{\Fill_X(T_n)}{\M(T_n)}=\frac{n}{\nu+1}\longrightarrow\infty.
\]
Moreover, for some $c>0$ and all sufficiently large $v$,
\[
 \mathrm{FV}_{\nu+1}^X(v)\ge c v\log v.
\]
\end{theorem}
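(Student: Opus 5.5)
The plan is to build $X$ by gluing infinitely many compact pieces, each a product of a flat torus with a cone or an interval, along totally geodesic flat sub-tori, so that the CAT(0) property follows from Reshetnyak's gluing theorem. The asymptotic rank stays $\nu$ because no piece carries a $(\nu+1)$-flat of unbounded size. Concretely, for each $n$ I would take a flat $\nu$-torus $T^\nu_n$ whose systole grows with $n$, say a cube torus of side $\ell_n$. I would attach to it a ``filling region'' $P_n$: a $(\nu+1)$-dimensional nonpositively curved piece in which $T^\nu_n$ bounds, but in which every filling has mass at least $\tfrac{n}{\nu+1}\M(T^\nu_n)$.

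The natural candidate for $P_n$ is the Euclidean cone over a flat $\nu$-torus. Cones over CAT(1) spaces are CAT(0), but a flat torus of large size is not CAT(1), so I would instead use a product $T^{\nu-1}\times C_n$, where $C_n$ is a Euclidean cone over a circle of length $\ge 2\pi$ (hence CAT(0)), truncated at radius $r_n$. The cycle $T_n$ is then $T^{\nu-1}\times\partial C_n$, and the filling ratio is governed by $\mathrm{area}(C_n)/\mathrm{length}(\partial C_n)=r_n/2$. A cleverer choice, namely the cone over a $\nu$-sphere of large volume built as a spherical join, gives the ratio $r/(\nu+1)$. This is exactly the constant $n/(\nu+1)$: a Euclidean cone of height $n$ over a $\nu$-cycle of mass $m$ has mass $nm/(\nu+1)$. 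So $T_n$ should be the boundary of a truncated cone of radius $n$ over a CAT(1) $\nu$-dimensional link $L_n$ with $\mathrm{vol}(L_n)$ growing. These cones are joined to a common $(\nu+1)$-dimensional ``spine'', for instance a tree of such pieces glued along geodesically complete subspaces, to get local compactness and geodesic completeness.

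The key steps, in order, are the following.
\begin{itemize}
\item Verify that $X$ is CAT(0), using Reshetnyak gluing along convex subsets, and that it is locally compact and geodesically complete, extending geodesics through cone points via links without free faces.
\item Show $\asrk X=\nu$. The upper bound comes from the fact that any asymptotic cone is built from cones over links of dimension $\nu-1$ glued along trees, so it contains no $(\nu+1)$-flat. The lower bound comes from the flat $\nu$-dimensional sectors of growing size inside the cones.
\item Prove the lower bound $\Fill_X(T_n)\ge \tfrac{n}{\nu+1}\M(T_n)$. Here I would use the $1$-Lipschitz radial retraction of $X$ onto the cone piece (nearest-point projection to a convex subset) together with uniqueness of fillings. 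Since $X$ is $(\nu+1)$-dimensional and the cone piece is a pseudomanifold with boundary, a filling projected into the piece must equal the cone, because $\I_{\nu+1}$-cycles vanish in a $(\nu+1)$-dimensional CAT(0) space (it is contractible and top-dimensional). The upper bound is the cone itself.
\item Deduce $\mathrm{FV}(v)\ge cv\log v$ by arranging $\M(T_n)\approx e^{n}$, i.e.\ choosing $\mathrm{vol}(L_n)$ to grow exponentially, and interpolating for $v$ between consecutive values by adding small disjoint cycles or by scaling.
\end{itemize}

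I expect the main obstacle to be the asymptotic rank bound. Cones of radius $n$ over links of large volume must not produce $(\nu+1)$-dimensional flats, or even quasiflats, in the limit. The links $L_n$ must be CAT(1) with injectivity radius exactly $\pi$ but no spherical $\nu$-sphere, i.e.\ they should be $\nu$-dimensional but ``rank one'' at scale $\pi$. A second obstacle is that the choice $\M(T_n)\approx e^n$ must be compatible with this. I would first try links that are spherical buildings or branched covers of $S^\nu$ with large girth, and check via Kleiner's characterization that no $(\nu+1)$-dimensional $\omega$-limit flat appears.
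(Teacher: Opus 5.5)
The filling side of your proposal matches the paper: truncated Euclidean cones $K_n$ of radius $n$ over CAT(1) $\nu$-dimensional links, fillings unique because $X$ is $(\nu+1)$-dimensional and every cycle bounds, the cone formula giving the ratio $n/(\nu+1)$, and $v\log v$ from exponential growth of $\M(T_n)$ with bounded successive ratios plus monotonicity of $\mathrm{FV}$. \textbf{The gap is the asymptotic rank}, which is the real content of the theorem. You leave it open, and your candidate links fail.

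\emph{Why your links fail.} Suppose the links contain spherical balls of radius $\delta_n$ with $n\delta_n\to\infty$. This happens for spherical buildings (apartments are round $\nu$-spheres), for joins of long circles (their cones are products of flat cones), and for branched covers of $S^\nu$ with a fixed branch locus. Then $K_n$ contains Euclidean $(\nu+1)$-balls of radius comparable to $n\delta_n$, so $\asrk X\ge\nu+1$. The same happens for Riemannian links of uniformly bounded curvature. At radius $r\approx n/2$ and scale $\lambda$ with $\lambda n\to\infty$, the cross section is the link blown up by the factor $\lambda r\to\infty$. This converges to $\R^\nu$, so the asymptotic cone is $\R^{\nu+1}$. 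Hence your claim that asymptotic cones are ``cones over $(\nu-1)$-dimensional links glued along trees'' is unjustified.

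\emph{The missing idea.} You need a link that is CAT(1) only because of large injectivity radius, while its curvature tends to $-\infty$. The paper takes $Z_n=(\Sigma_n\times Q_n,n^{-2}\bar g_n)$, with $\Sigma_n$ hyperbolic of systole $>2\pi n$ and $Q_n$ a flat torus of side $4\pi n$. Contrary to your remark, a flat torus of systole at least $2\pi$ \emph{is} CAT(1). At radius $r\le n$ and scale $\lambda\to0$, the surface factor of the rescaled cross section has curvature at most $-\lambda^{-2}$ but injectivity radius greater than $\pi\lambda r$. So deep-core asymptotic cones are $\R^{\nu-1}\times T$ with $T$ a tree (Lemma~\ref{lem:core-limits}). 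Together with Wenger's characterization and the rescaling/diagonal reduction in Proposition~\ref{prop:rank}, this gives $\asrk X=\nu$. For $\nu\ge3$ these cones contain no genuine flat $\nu$-sectors, so your lower bound must also go through such limits. This choice also settles your ``second obstacle''. Systole $>2\pi n$ forces genus at least exponential in $n$, congruence towers achieve $\gamma_n\asymp e^{3\pi n/2}$, and hence $n\asymp\log\M(T_n)$, which is exactly the $v\log v$ bound.

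\textbf{Second gap: geodesic completeness.} A truncated cone has an outer boundary $\{r=n\}$ at which radial geodesics stop. Nothing in your spine addresses this. Gluing something along $\{r=n\}$ is not covered by Reshetnyak's theorem, since this sphere is not convex in $K_n$. Continuing the Euclidean cone past $r=n$ brings back $\R^{\nu+1}$ asymptotic cones, by the computation above. The paper instead caps each core with a hyperbolic funnel $\{s\ge s_0\}\subset C_{-1}(Z_n)$, $\sinh s_0=n$, whose boundary metric matches that of $\{r=n\}$. The combined warping function is convex, so $Y_n$ is CAT(0) by Alexander--Bishop. The funnel is locally CAT$(-1)$, so it contributes only trees to asymptotic cones.

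Finally, your preliminary pieces cannot be CAT(0) at all: products with flat tori, $T^{\nu-1}\times C_n$, and gluings along flat sub-tori all contain closed geodesics. Reshetnyak's theorem does not apply to them.
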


Thus a cocompact isometry group is essential for Gromov's conjecture.
\medskip

\noindent {\bf 2. Construction}\par\nopagebreak
\medskip

\noindent We use integral currents in the sense of Ambrosio--Kirchheim \cite{AK}.
We write $\I_k(X)$ for integral $k$-currents of finite mass and set
\[
 \Fill_X(T)=\inf\{\M(V):V\in\I_{k+1}(X),\ \partial V=T\}.
\]

The space $X$ will be constructed by attaching a sequence of building blocks $Y_n$ to a line $L$.
Each building block is glued from a truncated Euclidean cone over a closed manifold and a
hyperbolic funnel. Let $Z$ be a compact CAT(1) length space. Its
Euclidean and hyperbolic cones
\[
 E=C_0(Z)=[0,\infty)\times_r Z,\qquad
 H=C_{-1}(Z)=[0,\infty)\times_{\sinh s}Z
\]
are CAT(0) and CAT$(-1)$, respectively
\cite[Theorem~II.3.14]{BH}. Fix $R>0$ and put
$s_0=\operatorname{arsinh}R$. Glue $\{r\le R\}\subset E$ to
$\{s\ge s_0\}\subset H$, identifying boundary points with the same
$Z$-coordinate, and take the length metric. Both boundary length
metrics are $R\cdot d_Z$. Denote the resulting space by $Y(Z,R)$,
its vertex by $o$, and its Euclidean core by $K= \{r\le R\}$. Its warping function,
in the radial coordinate from $o$, is
\[
 F_R(r)=
 \begin{cases}
 r,&0\le r\le R,\\
 \sinh(r-R+s_0),&r\ge R.
 \end{cases}
\]
This function is increasing and convex, with
$F_R'(0+)=F_R'(R-)=1$ and $F_R'(R+)=\sqrt{1+R^2}$.
Alexander--Bishop's warped-product criterion
\cite[Theorem~2.2]{AB} therefore shows that $Y(Z,R)$ is CAT(0).
It is locally compact, and $K=\overline B_R(o)$ is a convex ball with its
original Euclidean cone metric. Outside $K$, the metric is locally
CAT$(-1)$.  Hence every asymptotic cone
of $Y(Z,R)$ is a  tree.

Fix an integer $\nu\ge2$. Choose closed oriented hyperbolic surfaces
$(\Sigma_n,g_n)$ of genus $\gamma_n$ such that
\[
 \sys(\Sigma_n)>2\pi n,\qquad \gamma_n\asymp e^{3\pi n/2}.
\]
Indeed, \cite[Theorem~1.5]{KSV} gives
$\sys(\Sigma)\ge\frac43\log\gamma-C_0$ along a fixed prime-power
congruence tower. Successive covering degrees in this tower are
uniformly bounded; see \cite[\S4]{KSV}. Taking the first level with
genus at least $\exp(\frac34(2\pi n+C_0+1))$ gives the required surfaces.
Choose flat tori $Q_n=(\R/(4\pi n\mathbb Z))^{\nu-2}$.
Give $N_n=\Sigma_n\times Q_n$
the product metric $\bar g_n$, with $Q_n$ a point when $\nu=2$, and put
\[
 Z_n=(N_n,n^{-2}\bar g_n),\qquad
 Y_n=Y(Z_n,n),\qquad K_n=\{r\le n\}\subset Y_n.
\]
Since the injectivity radius satisfies $\inj(N_n)>\pi n$ and $N_n$
is locally CAT(0), the space $Z_n$ is CAT(1).
Attach the vertex $o_n$ of $Y_n$ to the integer $n$ of a complete
line $L=\R$, and equip the union $X$ with its path metric. Define the {\em core} of $X$ by
\[
 C=L\cup\bigcup_{n\ge1}K_n.
\]
Reshetnyak's gluing theorem \cite[Theorem~II.11.1]{BH} shows that
$X$ is CAT(0). Local finiteness of the attachments gives local compactness.
Away from its vertex, $Y_n$ is a CAT(0) $(\nu+1)$-manifold and hence
locally geodesically complete. At $o_n$, every direction has an
antipode since $\inj(Z_n)>\pi$. Thus $Y_n$, and consequently $X$,
is geodesically complete.
The subsets $C$ and $K_n$ are closed and convex.
For each fixed $n$, every asymptotic cone of $Y_n$ is a  tree.

Since $X$ has Hausdorff dimension $\nu+1$, and
every integral cycle in $X$ bounds \cite[Corollary~1.4]{Wenger2005},
integral $(\nu+1)$-cycles vanish and integral $\nu$-cycles have unique
fillings. Let $A_n$ be the integral current induced by the oriented
core $K_n$, and put $T_n=\partial A_n$. Uniqueness and the
Euclidean cone metric give
\[
 \frac{\Fill_X(T_n)}{\M(T_n)}
 =\frac{\M(A_n)}{\M(T_n)}=\frac{n}{\nu+1}.
\]
Moreover, Gauss--Bonnet gives
\[
 a_n:=\M(T_n)=4\pi(\gamma_n-1)(4\pi n)^{\nu-2}
 \asymp n^{\nu-2}e^{3\pi n/2}.
\]
Thus $n\asymp\log a_n$ and
$\mathrm{FV}_{\nu+1}^X(a_n)\ge c a_n\log a_n$ for some $c>0$.
Since $a_n$ is nondecreasing and tends to infinity with bounded
successive ratios, monotonicity gives the asserted lower bound for
all sufficiently large $v$.

It remains to show that $X$ has asymptotic rank $\nu$. We first consider
limits based far from both the line $L$ and the hyperbolic funnels.


\begin{lemma}\label{lem:core-limits}
Let $\lambda_k\to0$ and $x_k=(r_k,u_k)\in K_{n_k}$ satisfy
\begin{equation}\label{eq:deep-core}
 \wlim\lambda_kr_k=\infty,\qquad
 \wlim\lambda_k(n_k-r_k)=\infty.
\end{equation}
Then $\wlim(\lambda_kX,x_k)$ is isometric to $\R^{\nu-1}\times T$
for a tree $T$ containing a line. In particular, it has dimension $\nu$.
\end{lemma}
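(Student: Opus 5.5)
The plan is to show that, in the rescaled metric $\lambda_kX$, the region around $x_k$ looks more and more like a product: a radial line, times a flat factor, times a very negatively curved surface. The ultralimit of such products is $\R\times\R^{\nu-2}\times T$, which is the claimed $\R^{\nu-1}\times T$.

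\textbf{Step 1: localize.} Fix $D>0$. By \eqref{eq:deep-core}, for $\omega$-almost every $k$ we have $D/\lambda_k<r_k/2$ and $D/\lambda_k<n_k-r_k$. The vertex $o_{n_k}$ is the only point where $Y_{n_k}$ meets $L$ and the other blocks. Hence the ball $B_{D/\lambda_k}(x_k)\subset X$ lies in the Euclidean core $K_{n_k}$ and avoids both the vertex and the funnel. Since $K_{n_k}$ is convex, distances between points of this ball can be computed in the cone $C_0(Z_{n_k})$. So it suffices to identify $\wlim(\lambda_kC_0(Z_{n_k}),x_k)$.

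\textbf{Step 2: almost-product comparison.} On the annulus $r_k-D/\lambda_k\le r\le r_k+D/\lambda_k$, the cone metric $dr^2+r^2g_{Z}$ is $(1+\epsilon_k)$-bi-Lipschitz to the product metric $dr^2+r_k^2g_Z$, where $\epsilon_k\lesssim D/(\lambda_kr_k)\to0$. This comes from comparing lengths of curves, and geodesics between points of the smaller ball $B_{D/(3\lambda_k)}(x_k)$ stay inside the annulus. A sequence of $(1+\epsilon_k)$-bi-Lipschitz maps with $\epsilon_k\to0$ induces an isometry of ultralimits. Therefore
\[
\wlim(\lambda_kX,x_k)\cong\R\times\wlim\bigl(\lambda_kr_k\,Z_{n_k},u_k\bigr)
=\R\times\wlim\bigl(\delta_k N_{n_k},u_k\bigr),\qquad \delta_k=\frac{\lambda_kr_k}{n_k}.
\]
Since $N_{n_k}=\Sigma_{n_k}\times Q_{n_k}$ carries a product metric, this ultralimit splits as $\wlim(\delta_k\Sigma_{n_k})\times\wlim(\delta_kQ_{n_k})$.

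\textbf{Step 3: identify the factors.}
\begin{itemize}
\item For the torus: the side length $4\pi n_k$ rescales to $4\pi\lambda_kr_k\to\infty$. Hence $\wlim\delta_kQ_{n_k}\cong\R^{\nu-2}$.
\item For the surface: $\delta_k\le\lambda_k\to0$, so $\delta_k\Sigma_{n_k}$ has curvature $-\delta_k^{-2}\to-\infty$. Its injectivity radius is at least $\delta_k\pi n_k=\pi\lambda_kr_k\to\infty$.
\item Consequently every ball of radius $D$ in $\delta_k\Sigma_{n_k}$ is isometric to a ball in $\delta_k\mathbb H^2$. The ultralimit is then a ball in an asymptotic cone of $\mathbb H^2$, which is an $\R$-tree $T$ containing a line (the limit of a geodesic).
\end{itemize}
Combining, the limit is $\R\times\R^{\nu-2}\times T=\R^{\nu-1}\times T$. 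Its topological dimension is $\nu-1+1=\nu$.

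\textbf{Main obstacle.} The delicate point is Step 2. One must check that distances inside the rescaled ball are really realized by curves staying in the annulus, so that the warped-product length comparison applies globally and not just infinitesimally. One must also check that the product splitting is compatible with ultralimits of balls of radius $D$ for every $D$. I would handle this with an exhaustion: prove the isometry on balls of radius $D/3$ for each fixed $D$, then let $D\to\infty$.
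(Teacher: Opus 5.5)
Your proposal is correct and is essentially the paper's argument: localize in the core $K_{n_k}$, replace the cone metric on a bounded rescaled radial window by the product metric, and identify the rescaled cross-section $(N_{n_k},\delta_k^2\bar g_{n_k})$ as a tree (curvature $-\delta_k^{-2}\to-\infty$, injectivity radius $>\pi\lambda_kr_k$) times $\R^{\nu-2}$ (circle lengths $4\pi\lambda_kr_k$). Your $(1+\epsilon_k)$-bi-Lipschitz comparison spells out the paper's one-line remark that the relative warping factor is $1+t/h_k$, and your route to the tree factor (large balls isometric to balls in $\delta_k\mathbb{H}^2$, hence the asymptotic cone of $\mathbb{H}^2$) replaces the paper's citation to Bridson--Haefliger; neither change is substantive.
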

\begin{proof}
Every ball of fixed rescaled radius about $x_k$ eventually lies in
$K_{n_k}$. Put $h_k=\lambda_kr_k$ and
$b_k=\lambda_kr_k/n_k\le\lambda_k$. The rescaled intrinsic cross
section through $x_k$ is $(N_{n_k},b_k^2\bar g_{n_k})$.
Its surface factor has curvature $-b_k^{-2}\to-\infty$ and injectivity
radius greater than $\pi h_k$, with $\wlim h_k=\infty$. Its pointed
ultralimit is therefore a tree $T$ containing a line
\cite[Proposition~III.H.1.2]{BH}. The torus factor has circle lengths
$4\pi h_k$ and hence converges to $\R^{\nu-2}$. On bounded rescaled
radial intervals the relative warping factor is $1+t/h_k$, with
$\wlim(1+t/h_k)=1$. Since both radial endpoints escape, the asymptotic
cone splits as $T\times\R^{\nu-1}$.
\end{proof}

\begin{proposition}\label{prop:rank}
The space $X$ has asymptotic rank $\nu$.
\end{proposition}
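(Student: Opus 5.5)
We must show two inequalities: $\asrk X\ge\nu$ and $\asrk X\le\nu$. Recall that for a locally compact (or proper) CAT(0) space the asymptotic rank equals the supremum of the dimensions of asymptotic cones. Equivalently, it is the largest $k$ for which some asymptotic cone contains an isometric copy of a $k$-dimensional normed space, here $\R^k$ since cones are CAT(0).

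\emph{Lower bound.} We apply Lemma~\ref{lem:core-limits} with $\lambda_k=n_k^{-1/2}$ and $x_k=(n_k/2,u)\in K_{n_k}$. Then $\lambda_kr_k$ and $\lambda_k(n_k-r_k)$ both equal $\sqrt{n_k}/2\to\infty$, so the cone is $\R^{\nu-1}\times T$ with $T$ containing a line. It therefore contains a flat $\R^\nu$, which gives $\asrk X\ge\nu$.

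\emph{Upper bound.} Fix an ultrafilter, scales $\lambda_k\to0$ and basepoints $x_k$, and let $X_\omega$ be the cone. We show that $X_\omega$ contains no isometric $\R^{\nu+1}$; by the flat characterization, that suffices. The plan is to decompose $X_\omega$ into pieces that each have dimension at most $\nu$, glued along convex subsets along a tree-like pattern.

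The ambient structure is the following. $X$ is a tree of spaces: the $Y_n$ are hung on $L$ at single points, and each $Y_n$ is itself a union of the convex core $K_n$ and a funnel. The pointed limit of the image of $L$ is a line or a ray or a point. Pieces $Y_n$ with $n$ within distance $O(1/\lambda_k)$ of the basepoint can contribute, but only $\omega$-finitely many scales matter, since the attaching points are $1$ apart and $X_\omega$ is a tree-graded union. Concretely, any flat in $X_\omega$ is a limit of sets, and a flat of dimension $\ge2$ cannot cross a cut point. Every point of $L$ and every vertex $o_n$ disconnects $X$, so the limits of these cut points are cut points of $X_\omega$. A flat $F\cong\R^{\nu+1}$ therefore lies in the ultralimit of a single sequence of blocks $Y_{n_k}$.

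Next we analyze the ultralimit of $(\lambda_kY_{n_k},x_k)$ by cases on $\omega$-limits of $\lambda_kr(x_k)$, $\lambda_kn_k$ and $\lambda_k(r(x_k)-n_k)$.

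(a) In the deep core case, Lemma~\ref{lem:core-limits} gives dimension $\nu$.

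(b) If $\lambda_kn_k$ is bounded, the whole core collapses to a bounded set of diameter at most $\lim\lambda_kn_k$. The cross section $(N_{n_k},(\lambda_kr)^2\bar g/n_k^2)$ then has surface curvature $\to-\infty$ at every radius. Outside the core, the funnel is CAT$(-1)$ and rescales to CAT$(-\infty)$, hence to trees. We expect the limit to be a cone, truncated, over a tree times $\R^{\nu-2}$-type factors of bounded size. The torus factor has size $4\pi\lambda_kr\cdot$const, which is bounded. So the limit is of the form $\R_{\ge0}\times_t(\text{tree}\times\text{torus-limit})$ glued to a tree. This should have dimension at most $\nu$, and the point is that the Euclidean cone over (tree $\times\,\R^{\nu-2}$-piece) is $\nu$-dimensional.

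(c) If the basepoint is near the vertex or the boundary $\{r=n_k\}$ at scale $1/\lambda_k$, with $\lambda_kn_k\to\infty$, then the limit is a union of a cone-type piece and a funnel-type piece. The cone-type piece is either $\R^{\nu-1}\times T$ cut by a half-space, or the asymptotic cone of $E(Z_{n_k})$ near the vertex, i.e. a Euclidean cone over the ultralimit of $Z_{n_k}$ with rescaled metric. The funnel-type piece is a tree, because the funnel metric is locally CAT$(-1)$ with $F_R'(R+)=\sqrt{1+n_k^2}\to\infty$. These are glued along a convex subset. Using the result that in a CAT(0) union of two closed convex pieces any flat of top dimension lies in one piece, each piece has dimension at most $\nu$, and the flat rank is at most $\nu$.

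The main obstacle will be (c) near the vertex $o_{n_k}$. There the limit of $\lambda_kK_{n_k}$ is the Euclidean cone over the ultralimit of the cross sections, which are ``$\R^{\nu-2}\times$ tree''-like only at radii where $\lambda_kr\to\infty$. I would handle this by observing that the cross section at radius $\rho$ in the limit is an ultralimit of $(N_{n_k},(\rho/(\lambda_kn_k))^2\bar g)$. Its surface factor has curvature $-(\lambda_kn_k/\rho)^2\to-\infty$, so it is a tree, and hence the cone has topological dimension at most $1+1+(\nu-2)=\nu$. Dimension (rather than flats) is monotone under the finite convex gluing and under taking subsets, so the bound $\dim X_\omega\le\nu$ follows, and with it $\asrk X\le\nu$.
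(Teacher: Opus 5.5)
Your lower bound matches the paper's. Your cut-point step is also valid here. It works because $d(q,q')=d(q,p(q))+|p(q)-p(q')|+d(p(q'),q')$ for points $q,q'$ in different blocks, with $p$ the nearest-point projection to $L$, and this identity passes to ultralimits. The problem is in how you assemble the block limits for the upper bound.

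\textbf{The convex-union step fails.} In (c) you use the claim that a top-dimensional flat in a CAT(0) union of two closed convex pieces lies in one piece. This is false: $\R^2$ is the union of two closed half-planes. Its hypotheses also fail in your setting:
\begin{itemize}
\item The funnel part of a block limit is not convex, since geodesics between funnel points pass through the core.
\item It is not a tree, since its closure contains the $(\nu-1)$-dimensional boundary of the core limit. Only points at positive distance from the core have tree neighbourhoods.
\end{itemize}

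\textbf{The dimension repair is only asserted.} Your last paragraph switches to the right tool: the sum theorem $\dim(A\cup B)\le\max\{\dim A,\dim B\}$ for closed subsets of a metric space, together with monotonicity under subsets. But the argument then rests on dimension bounds for every block limit, which you state without proof (``we expect'', ``should have dimension at most $\nu$'' in (b)). You would need:
\begin{itemize}
\item covering dimension at most one for the non-separable limit trees, and for the funnel limit away from the core boundary;
\item the product inequality;
\item an actual identification of the near-vertex limit.
\end{itemize}
For the near-vertex limit, your formula has a spurious factor $\lambda_k^{-1}$. The rescaled cross-section at radius $\rho$ is $(N_{n_k},(\rho/n_k)^2\bar g_{n_k})$. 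So the torus factor stays compact, with circles of length $4\pi\rho$. The surface factor converges to a one-dimensional space that is only locally an $\R$-tree. The limit is $C_0(Z_\omega)$ with $Z_\omega=\wlim Z_{n_k}$, and the dimension count still gives $\nu$.

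Also, $X_\omega$ may contain infinitely many block limits attached along $L_\omega$. So ``finite gluing'' cannot give $\dim X_\omega\le\nu$. What you can conclude is only that the single block limit containing the flat (or ball) has dimension at most $\nu$, which is enough.

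\textbf{How the paper avoids this.} The paper never needs the case analysis:
\begin{itemize}
\item Points at positive distance from $C_\omega$ have tree neighbourhoods, so a Euclidean $(\nu+1)$-ball $B$ lies in $C_\omega$.
\item Shrinking $B$ away from $L_\omega$ places it in one core, at definite rescaled distance from the vertex.
\item The radial homothety $(r,u)\mapsto(r/2,u)$ is a similarity of ratio $1/2$ mapping $K_n$ into $\{r\le n/2\}$. It gives definite distance from the outer boundary as well.
\item Rescaling and a diagonal argument push both distances to infinity, and Lemma~\ref{lem:core-limits} then yields a cone of dimension $\nu$.
\end{itemize}
So the near-vertex cone, the bounded-core case and the funnel gluing never have to be analysed. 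Your route, once the dimension bounds are proved, would give an explicit description of every block limit, but at considerably higher cost.
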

\begin{proof}
By Wenger's characterization \cite[Proposition~3.1]{Wenger2011},
if $\asrk X>\nu$, some asymptotic cone contains a Euclidean $(\nu+1)$-ball
$B$. A closed ball disjoint from $C$ lies in a hyperbolic funnel
and is CAT$(-1)$. Thus points at
positive limiting distance from $C$ have tree neighborhoods.
It follows that $B$ lies in the core ultralimit $C_\omega$.

The line $L$ limits either to a line $L_\omega$ or to the empty set. 
Thus after decreasing the ball and rescaling, we may assume that $B$ has radius equal to one and has positive distance to $L_\omega$. Hence $B$ can be represented  as an ultralimit of compact sets $B=\wlim B_k$ where
each $B_k$ is contained in a single core $K_{n_k}$. Apply the radial
homotheties $(r,u)\mapsto(r/2,u)$, pass to the corresponding pointed
ultralimit, and renormalize. Since $n_k-r_k/2\ge r_k/2$, we may also
assume that the center has positive rescaled distance from the outer
boundaries. Denote by $x_\omega$ the center of $B$. Then,
the rescaled distances from the representatives $x_k\in B_k$ of $x_\omega$ to both, the vertices $o_{n_k}$ and the boundaries of $K_{n_k}$ are bounded below by a constant  $c>0$. Changing the scaling factors of the asymptotic cone we obtain for every $l\in\mathbb N$ a new asymptotic cone $W_l$ of $X$ which contains a Euclidean $(\nu+1)$-ball of radius $l$ that is a limit of compact sets at distance at least $lc$ from the corresponding vertices and boundaries. By a diagonal argument, we now obtain an asymptotic cone $(X_\omega,x_\omega)=\wlim(\lambda_k\cdot X,x_k)$ with a Euclidean $(\nu+1)$-ball and with  $x_k=(r_k,u_k)\in K_{n_k}$, satisfying $\wlim\lambda_kr_k=\infty$ and
$\wlim\lambda_k(n_k-r_k)=\infty$.  This contradicts Lemma~\ref{lem:core-limits} and shows $\asrk X\leq \nu$.
Conversely, choose $x_n=(n/2,u_n)\in K_n$ and
$\lambda_n=n^{-1/2}$. Both conditions in \eqref{eq:deep-core} hold,
so Lemma~\ref{lem:core-limits} gives an asymptotic cone
$\R^{\nu-1}\times T$ containing a Euclidean $\nu$-flat.
Hence $\asrk X\ge\nu$.
\end{proof}

The theorem follows. By \cite[Theorem~1.2]{Peteranderl}, the space
$X$ has infinite asymptotic Nagata dimension. One can show that even
the asymptotic dimension of $X$ is infinite, cf. \cite{LS}.
For $\nu=2$, the boundaries of the cores are closed surfaces of
unbounded genus. Their filling-to-area ratios tend to infinity, so
a linear version of \cite[Corollary~B]{DLPS} cannot hold with a constant
independent of the genus.
For $\nu=2$, the example also shows that J{\o}rgensen--Lang's theorem
\cite[Theorem~3]{JL}, asserting that every $3$-dimensional Hadamard
manifold has Nagata dimension $3$, and its extension to 
CAT(0) $3$-manifolds by Papasoglu--Swenson
\cite[Theorem~4.4]{PS}, do not extend to locally compact,
geodesically complete $3$-dimensional CAT(0) spaces.

\smallskip
\noindent\textit{Acknowledgements.}
I thank Urs Lang, Panos Papasoglu, Jonas Peteranderl and Stefan Wenger for comments and feedback.

\smallskip
\noindent\textit{Declaration on AI use.}
ChatGPT assisted with drafting and editing this manuscript. The author takes full responsibility for the mathematical content and
the final text.

\end{document}